\theoremstyle{theorem}
\newtheorem{theorem}{Theorem}
\newtheorem{lemma}{Lemma}
\newtheorem{proposition}{Proposition}
\long\def\symbolfootnote[#1]#2{\begingroup%
\def\thefootnote{\fnsymbol{footnote}}\footnote[#1]{#2}\endgroup}
\theoremstyle{definition}
\newtheorem{definition}{Definition}
\newtheorem{remark}{Remark}
\numberwithin{equation}{section}
\begin{document}

\title[Unifying some classical results on Artinian rings and modules]
{Unifying some classical results on Artinian rings and modules}

\author{Donovan Leyba}
\address[Donovan Leyba]{Department of Mathematics\\
University of Colorado\\
Colorado Springs, CO 80918\\
USA}
\email{dleyba@uccs.edu}

\author{Zachary Mesyan}
\address[Zachary Mesyan]{Department of Mathematics\\
University of Colorado\\
Colorado Springs, CO 80918\\
USA}
\email{zmesyan@uccs.edu}

\author{Greg Oman}
\address[Greg Oman]{Department of Mathematics\\
University of Colorado\\
Colorado Springs, CO 80918\\
USA}
\email{goman@uccs.edu}

\symbolfootnote[0]{\emph{2020 Mathematics Subject Classification.} Primary: 13E10; Secondary: 13E05, 13A15, 16P20, 16P40.}
\symbolfootnote[0]{\emph{Key Words and Phrases.} Artinian ring, Artinian module, localization, minimal prime ideal, Noetherian ring, Noetherian module}

\begin{abstract} In this note, we introduce a very crude but natural notion of measure on the class of left $R$-modules over a ring $R$. We use this notion to give short proofs of some classical theorems on (left) Artinian rings and modules, due to Akizuki, Anderson, Hopkins, and Levitzki, as well as of some new results. \end{abstract}

\maketitle

\section{Introduction}

The notion of size is, of course, ubiquitous in mathematics. In fact, this concept arises in virtually every mathematical area. In set theory, there are so-called ``large" cardinals, which yield a way of counting the number of objects in sets of massive size. Analytically, one has the notion of divergence of series of positive terms, whose partial sums grow without finite bound. On the other end of the spectrum, one frequently encounters notions of smallness, often relative to some superset. For example, subsets of the real line of measure zero are, in some sense, ``sparse". Number-theoretically, the set of prime numbers is well-known to have natural density zero (relative to the set of positive integers). There are many other examples we could mention, which would take us too far afield.

In this paper, we introduce a simple but versatile measure of size for left modules over a ring--see Definition~\ref{measure-def} for more details. Using this idea we give unified short proofs of various classical results. Specifically, every Artinian left module over a left Artinian ring is Noetherian (Hopkins-Levitzki), every commutative zero-dimensional Noetherian ring is Artinian (Akizuki), and every Artinian module over a commutative ring is countably generated (Anderson). We also use the ideas introduced in the paper to give a description of the possible cardinality of an Artinian module over a commutative ring.

\section{Preliminaries}

We begin by remarking that \textbf{throughout this paper, all rings are assumed to be with identity, and all modules are assumed to be unitary.}

There are various properties of a left module $M$ over a ring $R$ which one may regard as representing a kind of ``largeness" or ``smallness." For example, one may informally view left modules possessing either the ascending or descending chain condition on submodules as being ``small", and left modules which are not countably generated as being ``large." We pause to collect the following facts (some of which are simply reformulations of standard ones found in most any graduate algebra text).

First, we remind the reader that if $M$ is a left module over a ring $R$ and $\kappa$ is an infinite cardinal, then $M$ is \emph{$\kappa$-generated} provided there is a generating set for $M$ of cardinality $\kappa$ but none of cardinality strictly less than $\kappa$. We denote the cardinality of a set $X$ by $|X|$.

\begin{lemma}\label{lem1} Let $R$ be a ring, let $M$ be a left $R$-module, let $N$ be a submodule of $M$, and let $\kappa$ be an infinite cardinal.

\begin{enumerate}
\item If $M$ is not Noetherian but $N$ is Noetherian, then $M/N$ is not Noetherian.
\item If $M$ is not Artinian but $N$ is Artinian, then $M/N$ is not Artinian.
\item If $|M|=\kappa$ and $|N|<\kappa$, then $|M/N|=\kappa$.
\item If $M$ is $\kappa$-generated but $N$ is $\lambda$-generated, for some cardinal $\lambda<\kappa$, then $M/N$ is $\kappa$-generated.
\end{enumerate}

\end{lemma}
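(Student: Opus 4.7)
The plan is to handle the four items almost separately, leaning on standard module-theoretic facts together with elementary cardinal arithmetic.

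For (1) and (2), I would argue by contrapositive, invoking the well-known closure of the Noetherian (resp.\ Artinian) property under extensions: if $N$ is a submodule of $M$ with both $N$ and $M/N$ Noetherian (resp.\ Artinian), then $M$ itself has that property, since a chain in $M$ can be analyzed by looking at its intersections with $N$ and its images in $M/N$ and stabilizing each. Thus if $M$ fails to be Noetherian (resp.\ Artinian) while $N$ satisfies the condition, the only remaining possibility is that $M/N$ fails as well. I expect these to be essentially one-line proofs.

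For (3), I would use the partition of $M$ into the cosets of $N$, which yields the cardinal identity $|M| = |N| \cdot |M/N|$. Suppose for contradiction that $|M/N| < \kappa$. Since also $|N| < \kappa$ by hypothesis and $\kappa$ is infinite, the product $|N| \cdot |M/N|$ is strictly less than $\kappa$: if either factor is infinite, the product equals the larger factor (still below $\kappa$), and otherwise the product is finite. This contradicts $|M| = \kappa$, while the inequality $|M/N| \leq |M| = \kappa$ is clear, giving equality.

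For (4), the easy direction is that any generating set $S$ of $M$ of cardinality $\kappa$ projects to a generating set of $M/N$ of cardinality at most $\kappa$, so the minimal generating cardinality of $M/N$ is at most $\kappa$. For the reverse inequality, suppose for contradiction that $M/N$ admits a generating set of cardinality $\mu < \kappa$. Lifting these generators arbitrarily to $M$ and adjoining a generating set of $N$ of cardinality $\lambda$ produces a generating set of $M$ of cardinality at most $\lambda + \mu$. Since both $\lambda$ and $\mu$ are strictly less than the infinite cardinal $\kappa$, the sum $\lambda + \mu$ is also strictly less than $\kappa$ (by the same case analysis as in (3), noting that $\kappa$ is infinite). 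This contradicts the hypothesis that $\kappa$ is the minimal generating cardinality of $M$.

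None of the steps appears to present a real obstacle; the only technical ingredient beyond basic module theory is the observation that, for infinite $\kappa$, both addition and multiplication of two cardinals smaller than $\kappa$ yield a result strictly smaller than $\kappa$, which I would cite or verify briefly in the course of (3) and (4).
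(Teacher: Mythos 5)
Your proposal is correct and follows essentially the same route as the paper: parts (1) and (2) via closure of the Noetherian/Artinian properties under extensions, part (3) via the coset decomposition $|M| = |N|\cdot|M/N|$ and cardinal arithmetic, and part (4) by lifting a putative small generating set of $M/N$ and adjoining generators of $N$. The only cosmetic differences are that the paper cites the extension-closure fact rather than sketching it, and computes (3) directly as $\max(|M/N|,|N|)$ instead of by contradiction.
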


\begin{proof} (1) Consider an exact sequence $0\to A\rightarrow B\to C\to 0$ of left $R$-modules. It is well-known \cite[Chapter X, Propositions 1.1 and 1.2]{SL} that $B$ is Noetherian if and only if both $A$ and $C$ are Noetherian. Now take $A:=N$, $B:=M$, and $C:=M/N$, with the canonical maps.  It follows that if $M$ is not Noetherian but $N$ is Noetherian, then $M/N$ is not Noetherian.

\vspace{.1 in}

(2) It is also well-known \cite[Chapter X, Proposition 7.1]{SL} that the claims above hold with ``Noetherian" replaced with ``Artinian."

\vspace{.1 in}

(3) Suppose that $|M|=\kappa$ and $|N|<\kappa$. Since $|M|=|M/N|\cdot|N|$ (recall that every coset of $N$ in $M$ has the same cardinality as $N$) and $|N|<\kappa$, we see that \[\kappa=|M|=|M/N|\cdot|N|=\max(|M/N|,|N|)=|M/N|.\]

\vspace{.1 in}

(4) Assume that $M$ is $\kappa$-generated and $N$ is $\lambda$-generated, for some cardinal $\lambda<\kappa$. It is clear that $M/N$ can be generated by a set of cardinality at most $\kappa$. Suppose, seeking a contradiction, that $M/N$ can be generated by a subset of cardinality $\alpha<\kappa$, and write $M/N=\langle N+m_i\colon i<\alpha\rangle$ and $N=\langle n_i\colon i<\lambda\rangle$. It is easy to see that $M$ is generated by $\{m_i\colon i<\alpha\}\cup\{n_i\colon i<\lambda\}$, which has cardinality $<\kappa$, since $\kappa$ is infinite, contradicting $M$ being $\kappa$-generated. \end{proof}

The previous lemma gives a canonical class of examples of the phenomenon found in many areas of mathematics, which can informally stated as: ``big"/``small" is ``big." It is this idea that drives the proofs in this paper and leads to the next definition.

For a ring $R$, we denote by $R$-$\mathrm{mod}$ the category of all left $R$-modules. We understand the elements of $R$-$\mathrm{mod}$ to be the objects, rather than the morphisms.

\begin{definition} \label{measure-def}
Let $R$ be a ring. Let us call a mapping $\mu\colon R\textrm{-mod}\to\{0,1\}$ a \textbf{module measure}, provided $\mu$ satisfies the following properties.
\begin{enumerate}
\item $\mu(\{0\})=0$.
\item For any left $R$-modules $M_1$ and $M_2$: if $M_1\cong M_2$, then $\mu(M_1)=\mu(M_2)$.
\item For any left $R$-module $M$ and submodule $N$: if $\mu(M)=1$ and $\mu(N)=0$, then $\mu(M/N)=1$.
\end{enumerate}

\noindent Let $\mu$ be a module measure on $R$-$\mathrm{mod}$. We call $\mu$ \textbf{sum-small}, provided that for any left $R$-module $M$ and countable collection $\mathcal{S}$ of submodules of $M$, for which $\mu(N)=0$ for all $N\in\mathcal{S}$, we have $\mu(\sum_{N\in\mathcal{S}}N)=0$.

Say that a left $R$-module $M$ is \textbf{minimally $\mu$-big} if $\mu(M)=1$, but $\mu(N)=0$ for every $N<M$. Given left $R$-modules $N<M$, say that $N$ is \textbf{maximally $\mu$-small} (relative to $M$) if $\mu(N)=0$, but $\mu(K)=1$ whenever $N<K\leq M$. 
\end{definition}

A module measure, in particular, splits $R$-$\mathrm{mod}$ into two strictly full subcategories (i.e., subcategories closed under isomorphisms and containing all the morphisms between their objects in $R$-$\mathrm{mod}$). It is the ``connecting" condition (3), however, that gives module measures their main power.

The following observations about minimally $\mu$-big modules will be used throughout the rest of the paper.

\begin{lemma} \label{min-big-lem}
Let $R$ be a ring, let $\mu$ be a module measure on $R$-$\mathrm{mod}$, and let $M$ be a minimally $\mu$-big left $R$-module. Then the following hold.
\begin{enumerate}
\item $M$ is indecomposable, that is, it is a nonzero left $R$-module that cannot be written as a direct sum of nonzero submodules.
\item Every nonzero left $R$-module homomorphism $M \to M$ is surjective.
\end{enumerate}
\end{lemma}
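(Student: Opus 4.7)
My plan is to derive both statements directly from properties (2) and (3) of a module measure together with the definition of minimally $\mu$-big, by looking at the appropriate quotient in each case.

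For part (1), I would argue by contradiction. Suppose $M = M_1 \oplus M_2$ with $M_1$ and $M_2$ both nonzero. Since $M_2 \neq 0$, the submodule $M_1$ is proper in $M$, so minimal $\mu$-bigness gives $\mu(M_1) = 0$. Property (3) of a module measure then yields $\mu(M/M_1) = 1$. But the direct sum decomposition gives $M/M_1 \cong M_2$, so property (2) produces $\mu(M_2) = 1$. On the other hand, $M_2 < M$ (since $M_1 \neq 0$), so minimal $\mu$-bigness also forces $\mu(M_2) = 0$, a contradiction.

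For part (2), let $f \colon M \to M$ be a nonzero homomorphism. Since $f \neq 0$, we have $\ker f \neq M$, so $\ker f$ is a proper submodule of $M$ and therefore $\mu(\ker f) = 0$ by minimal $\mu$-bigness. Applying property (3) with $N = \ker f$ gives $\mu(M/\ker f) = 1$. The first isomorphism theorem provides $M/\ker f \cong \operatorname{im}(f)$, so property (2) yields $\mu(\operatorname{im}(f)) = 1$. If $\operatorname{im}(f)$ were a proper submodule of $M$, minimal $\mu$-bigness would force $\mu(\operatorname{im}(f)) = 0$, contradicting what we just derived. Hence $\operatorname{im}(f) = M$ and $f$ is surjective.

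Neither part presents a real obstacle: both are immediate once one identifies the correct quotient to feed into property (3). The only subtlety to be careful about is the convention that $N < M$ means a \emph{proper} submodule (including $N = 0$), which is what makes part (1) go through cleanly when we check that both $M_1$ and $M_2$ are proper in a nontrivial direct sum decomposition.
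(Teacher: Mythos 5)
Your proof is correct and follows essentially the same route as the paper's: in each part you identify the relevant proper submodule, apply minimal $\mu$-bigness to get measure $0$, and feed the quotient into property (3) and the isomorphism-invariance property (2). The only (trivial) omission is that indecomposability as stated also requires $M \neq \{0\}$, which follows immediately from $\mu(M) = 1 \neq 0 = \mu(\{0\})$.
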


\begin{proof}
(1) First, note that as a minimally $\mu$-big left $R$-module, $M$ is, by definition, nonzero. Now, suppose that $M = N_1 \oplus N_2$ for some nonzero left $R$-modules $N_1$ and $N_2$. Then $\mu(N_1 \oplus 0)= 0 = \mu(0 \oplus N_2)$, while $\mu(M/(N_1 \oplus 0)) = 1$. But $M/(N_1 \oplus 0) \cong 0 \oplus N_2$, producing a contradiction. Hence $M$ must be indecomposable.

\vspace{.1 in}

(2) Let $\phi : M \to M$ be a left $R$-module homomorphism, and suppose that $\phi(M) \neq M$. Then $\mu(\phi(M)) = 0$, since $M$ is minimally $\mu$-big. Now, $\phi(M) \cong M/\ker(\phi)$, and so $\mu(M/\ker(\phi))=0$. This means that $\ker(\phi)$ cannot be a proper submodule of $M$, for otherwise we would have $\mu(\ker(\phi))=0$, and hence $\mu(M/\ker(\phi))=1$. Therefore $M = \ker(\phi)$, making $\phi$ the zero homomorphism. Thus any nonzero homomorphism $M \to M$ must be surjective.
\end{proof}

\section{Artinian implies Noetherian}

We now give an alternative proof, using module measures, of the following classical result.

\begin{theorem}[Hopkins-Levitzki \cite{CH,JL}]\label{thm1}
Let $R$ be a left Artinian ring. Then every Artinian left $R$-module is Noetherian.
\end{theorem}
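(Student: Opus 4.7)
The plan is to use the module measure $\mu$ defined by $\mu(M)=0$ if and only if $M$ is Noetherian, and to force a contradiction from a putative Artinian non-Noetherian left $R$-module. First, I would check that $\mu$ really is a module measure: property (1) of Definition~\ref{measure-def} holds since $\{0\}$ is Noetherian, property (2) is clear, and property (3) is precisely Lemma~\ref{lem1}(1). Assuming for contradiction that some Artinian left $R$-module $M$ has $\mu(M)=1$, I would use the DCC on $M$ to extract a minimal element $N_0$ from the nonempty family $\{N\leq M:\mu(N)=1\}$; this $N_0$ is automatically minimally $\mu$-big.

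Next, I would bring in the hypothesis that $R$ is left Artinian through its Jacobson radical $J=J(R)$, invoking the standard (but non-trivial) facts that $J$ is nilpotent, say $J^n=0$, and that $R/J$ is semisimple. Applied to $N_0$ this produces a dichotomy. If $JN_0=N_0$, iterating gives $N_0=J^nN_0=0$, contradicting $\mu(N_0)=1$ since $\mu(\{0\})=0$. Otherwise $JN_0$ is a proper submodule of $N_0$, and the minimal $\mu$-bigness of $N_0$ immediately forces $JN_0$ to be Noetherian.

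To finish, I would analyze $N_0/JN_0$. It is annihilated by $J$, hence naturally an $R/J$-module, and it is Artinian as a quotient of the Artinian module $N_0$. Since $R/J$ is semisimple, every $R/J$-module is semisimple, so $N_0/JN_0$ is a semisimple Artinian module, hence of finite length, hence Noetherian (as an $R/J$-module, and therefore as an $R$-module). The two-out-of-three property for Noetherianity cited in the proof of Lemma~\ref{lem1}(1), applied to the exact sequence $0\to JN_0\to N_0\to N_0/JN_0\to 0$, then makes $N_0$ itself Noetherian, contradicting $\mu(N_0)=1$.

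The main obstacle is not really conceptual, but imported from the classical theory: the nilpotence of $J(R)$ and the semisimplicity of $R/J(R)$ for $R$ left Artinian. The novel content of the argument is organizational---recognizing that minimal $\mu$-bigness packages exactly the right inductive hypothesis, so that the usual Nakayama-style reduction terminates cleanly with the proper submodule $JN_0$ already known to be Noetherian.
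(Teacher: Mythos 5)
Your argument is correct, and it diverges from the paper's proof in two respects worth noting. The paper uses the measure $\mu(M)=0$ if and only if $M$ is \emph{finitely generated} (justified by Lemma~\ref{lem1}(4)), whereas you use $\mu(M)=0$ if and only if $M$ is Noetherian (justified by Lemma~\ref{lem1}(1)); both choices make every proper submodule of the minimal counterexample $N_0$ ``small,'' and both run the same Nakayama-style dichotomy using the nilpotence of $J$ and the semisimplicity of $R/J$. The real difference is the endgame. The paper shows that $N_0/JN_0$ is again minimally $\mu$-big, invokes Lemma~\ref{min-big-lem}(1) to conclude that it is indecomposable, and then derives a contradiction from the fact that a semisimple indecomposable module is simple, hence finitely generated. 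You instead observe that $N_0/JN_0$ is Artinian and semisimple, hence of finite length, hence Noetherian, and then feed this together with the Noetherianity of the proper submodule $JN_0$ into the two-out-of-three property for the exact sequence $0\to JN_0\to N_0\to N_0/JN_0\to 0$ to conclude that $N_0$ itself is Noetherian, a contradiction. Your route bypasses Lemma~\ref{min-big-lem} entirely and never needs $N_0/JN_0$ to be minimally $\mu$-big; the price is a second use of the Artinian hypothesis (on the quotient, to cut the semisimple decomposition down to a finite one) and an appeal to the converse direction of the Noetherian two-out-of-three lemma, which goes beyond the one-directional statement packaged into condition (3) of Definition~\ref{measure-def}. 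Both are complete proofs.
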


\begin{proof}
For each left $R$-module $M$, let
\[\mu(M) := \left\{ \begin{array}{ll}
0 & \text{if } M \text{ is finitely generated,}\\
1 & \text{otherwise.}
\end{array}\right.\]
It follows from Lemma~\ref{lem1}(4) that $\mu$ is a module measure on $R$-$\mathrm{mod}$.

Suppose that $M$ is a left $R$-module that is Artinian but not Noetherian. Then, in particular, $M$ must have submodules that are not finitely generated. Since $M$ is Artinian, we can find a submodule $N$ of $M$ minimal with respect to the property that $\mu(N) = 1$, making $N$ minimally $\mu$-big.

Let $J$ denote the Jacobson radical of $R$, and suppose that $JN=N$. Since $R$ is left Artinian, $J^n = 0$ for some positive integer $n$~\cite[Theorem 4.12]{L}. So
\[N = JN = J^2N = \cdots = J^nN = 0,\]
which contradicts $\mu(N) = 1$. Hence it must be the case that  $JN \neq N$. Therefore $\mu(JN) = 0$, and so $\mu(N/JN) = 1$. Since every proper submodule of $N$ is finitely generated, the same goes for $N/JN$, and so the left $R$-module $N/JN$ is minimally $\mu$-big as well. Hence, by Lemma~\ref{min-big-lem}(1), $N/JN$ is indecomposable.

Now, since $R$ is left Artinian, $R/J$ must be semisimple~\cite[Theorem 4.14]{L}, and hence every left $R/J$-module is semisimple~\cite[Theorem 2.5]{L}. In particular, $N/JN$ is semisimple as a left $R/J$-module, and is therefore a direct sum of simple left $R/J$-modules~\cite[Theorem 2.4]{L}. But $R$ and $R/J$ have the same simple left modules~\cite[Proposition 4.8]{L}, and so $N/JN$ is a direct sum of simple left $R$-modules. Since $N/JN$ is indecomposable, it follows that $N/JN$ is itself a simple left $R$-module. So, in particular, it is finitely generated, contradicting $\mu(N/JN) = 1$.

Thus if $M$ is Artinian, then it is also Noetherian.
\end{proof}

\section{Artinian implies countably generated}

The remainder of the paper is devoted to commutative rings. In that situation we can say much more about minimally $\mu$-big modules, as the following proposition shows.

\begin{proposition}\label{prop1} Let $R$ be a commutative ring, let $\mu$ be a module measure on $R$-$\mathrm{mod}$, and suppose that $M$ is a minimally $\mu$-big $R$-module. Then the annihilator $\mathrm{Ann}_R(M)$ of $M$ in $R$ is a prime ideal. Moreover, if $\mathrm{Ann}_R(M)$ is a maximal ideal of $R$, then $M$ is simple. \end{proposition}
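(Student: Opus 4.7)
The plan is to exploit commutativity: for any $r \in R$, the map $\phi_r : M \to M$ sending $m \mapsto rm$ is an $R$-module endomorphism (this needs commutativity so that $\phi_r(sm)=rsm=s(rm)=s\phi_r(m)$), which lets us feed Lemma~\ref{min-big-lem}(2) into annihilator arguments.

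For primality, first note $\mathrm{Ann}_R(M) \neq R$ because $M \neq \{0\}$ (since $\mu(\{0\})=0$ while $\mu(M)=1$). I would proceed by contradiction: suppose $a,b \in R$ with $ab \in \mathrm{Ann}_R(M)$ but $a,b \notin \mathrm{Ann}_R(M)$. Then $\phi_a$ and $\phi_b$ are both nonzero endomorphisms of $M$, so by Lemma~\ref{min-big-lem}(2) each is surjective, giving $aM = M = bM$. Therefore $(ab)M = a(bM) = aM = M \neq 0$, which contradicts $ab \in \mathrm{Ann}_R(M)$. Hence $\mathrm{Ann}_R(M)$ is prime.

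For the second claim, assume $\mathfrak{m} := \mathrm{Ann}_R(M)$ is maximal, and let $K := R/\mathfrak{m}$, which is a field. Since $\mathfrak{m}$ annihilates $M$, the $R$-module structure on $M$ factors through $K$, giving $M$ the structure of a $K$-vector space; moreover the $R$-submodules of $M$ coincide with its $K$-subspaces (any $K$-subspace is an $R$-submodule since $R$ acts through $K$, and conversely any $R$-submodule absorbs multiplication by representatives of elements of $K$). I would then argue by contradiction: if $\dim_K M \geq 2$, pick any basis of $M$ over $K$ and split it into two nonempty parts, yielding a decomposition $M = V \oplus W$ into two nonzero $K$-subspaces, hence into two nonzero $R$-submodules. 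This contradicts the indecomposability of $M$ established in Lemma~\ref{min-big-lem}(1). Therefore $\dim_K M = 1$, so $M \cong K$ as $R$-modules, which is a simple $R$-module (its only $R$-submodules correspond to the $K$-subspaces $0$ and $K$).

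I do not anticipate a serious obstacle: both parts reduce, via commutativity, to a direct application of the two conclusions of Lemma~\ref{min-big-lem}. The only subtlety worth double-checking is the identification of $R$-submodules with $K$-subspaces in the second part, which relies on $\mathfrak{m}$ being contained in the annihilator, so that the $R$-action truly descends to a well-defined $K$-action.
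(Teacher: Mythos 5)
Your proposal is correct and follows essentially the same route as the paper: both parts reduce to the multiplication-by-$r$ endomorphism together with Lemma~\ref{min-big-lem}, with primality coming from surjectivity of the nonzero $\phi_a,\phi_b$ and simplicity coming from viewing $M$ as an $R/\mathfrak{m}$-vector space and invoking indecomposability. The only cosmetic difference is that you split a basis by hand where the paper cites the decomposition of a vector space into simple summands; the content is the same.
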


\begin{proof} For any $r \in R$, the map $m\mapsto rm$ gives an $R$-module homomorphism $M \to M$, and so either $rM=M$ or $rM=\{0\}$, by Lemma~\ref{min-big-lem}(2). Hence, if $r,s\in R \setminus \mathrm{Ann}_R(M)$, then $rM=M$ and $sM=M$, which implies that $rsM=r(sM)=rM=M$, and therefore $rs\notin\mathrm{Ann}_R(M)$. Since $\mathrm{Ann}_R(M)$ is a proper ideal of $R$ (given that $M$ is nontrivial, by definition), this proves that $\mathrm{Ann}_R(M)$ is prime.

Next, suppose that $\mathrm{Ann}_R(M)$ is a maximal ideal of $R$. Then $M$ is an $R/\mathrm{Ann}_R(M)$-vector space, and hence a direct sum of simple $R$-modules. But, by Lemma~\ref{min-big-lem}(1), this means that $M$ must itself be a simple $R$-module. \end{proof}

Our next goal is to give an alternative proof of a result originally due to Dan Anderson, namely, that Artinian modules are countably generated. We require the following lemma from the literature, which is perhaps not as well-known. We give a short, self-contained, proof.

\begin{lemma}[Sharp \cite{RS}]\label{lem2} Let $R$ be a commutative ring, and let $M$ be an Artinian $R$-module. For every maximal ideal $J$ of $R$, let
\[M[J]:=\{m\in M\colon J^nm=\{0\} \text{ for some } n\in \mathbb{Z}^+\}.\]
Then there are finitely many maximal ideals $J_1,\ldots,J_n$  of $R$ such that $M=M[J_1]\oplus\cdots\oplus M[J_n]$. \end{lemma}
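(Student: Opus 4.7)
The plan is to prove the lemma directly from the Artinian hypothesis, rather than invoking the module-measure framework introduced earlier. The argument splits into three ingredients: (i) sums of the $M[J]$'s over distinct maximal ideals are automatically direct; (ii) every element of $M$ lies in a finite such sum; and (iii) only finitely many maximal ideals $J$ satisfy $M[J] \neq 0$.

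For (i), I would use that distinct maximal ideals are comaximal, hence so are their powers, so $J^a + (J')^b = R$ for all $a, b \geq 1$ whenever $J \neq J'$. From $m_1 + \cdots + m_n = 0$ with $m_i \in M[J_i]$, the element $m_n$ is killed both by some $J_n^{a_n}$ and by $J_1^{a_1} \cdots J_{n-1}^{a_{n-1}}$ (which annihilates $-m_n = m_1 + \cdots + m_{n-1}$); since these two ideals sum to $R$, it follows that $m_n = 0$, and induction disposes of the rest.

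For (ii), given $m \in M$, the cyclic submodule $Rm$ is Artinian, so the ring $S := R/\mathrm{Ann}_R(m)$ is an Artinian commutative ring (its ideals correspond bijectively to the $R$-submodules of $Rm$). Standard descending-chain arguments then produce (a) finitely many maximal ideals in $S$, (b) nilpotence of the Jacobson radical of $S$, and (c) via the Chinese Remainder Theorem applied to pairwise comaximal powers of those maximals, an isomorphism $S \cong A_1 \times \cdots \times A_s$ with each $A_i$ a local Artinian ring. The maximal ideal of each $A_i$ pulls back to a maximal ideal $K_i$ of $R$, and the orthogonal idempotents of the product decompose $m = m_1 + \cdots + m_s$ with $m_i \in M[K_i]$. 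For (iii), I would use the socle of $M$: as a nonzero Artinian semisimple submodule, $\mathrm{Soc}(M)$ is a finite direct sum $\bigoplus_{j=1}^n S_j$ of simples $S_j \cong R/J_j$ for maximal $J_j$. Any maximal $J$ with $M[J] \neq 0$ contributes a copy of $R/J$ to the socle (pick $0 \neq m \in M[J]$ with minimal $k$ such that $J^k m = 0$; then $J^{k-1} m$ is a nonzero submodule killed by $J$ and so generates a simple submodule $\cong R/J$), forcing $J \in \{J_1, \ldots, J_n\}$. Combining (ii) and (iii) yields $M = M[J_1] + \cdots + M[J_n]$, which is direct by (i).

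The main obstacle is step (ii), because it implicitly relies on the structure theorem for Artinian commutative rings. Each of the ingredients (finiteness of maximals, nilpotence of the Jacobson radical, and the CRT decomposition) has a short descending-chain proof, but weaving them together self-containedly is the most delicate portion of the argument; an alternative is to work inside $M$ directly by stabilizing chains $K^n m$ and peeling off $K$-primary components using comaximality, at the price of extra bookkeeping.
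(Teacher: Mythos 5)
Your argument is correct, and it takes a genuinely different route from the paper's in two of its three steps. The directness argument (i) is essentially the paper's (both rest on comaximality of powers of distinct maximal ideals; you produce $1=u+v$ explicitly, the paper argues via a prime ideal containing the annihilator). The real divergence is in (ii) and (iii). The paper handles (ii) by invoking its own Theorem~\ref{thm1} (Hopkins--Levitzki) to conclude that $R/\mathrm{Ann}_R(m)$ is Noetherian as well as Artinian, so that $Rm$ has a composition series; this yields maximal ideals with $J_1\cdots J_k m=0$, and then a Chinese-Remainder-style induction ($1=x+y$ with $x\in J_1^{k_1}$, $y\in J_2^{k_2}\cdots J_{n+1}^{k_{n+1}}$) splits $m$ into primary components. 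You instead apply the full structure theorem for commutative Artinian rings to $R/\mathrm{Ann}_R(m)$ and split $m$ using orthogonal idempotents. This buys independence from the module-measure machinery and from Hopkins--Levitzki, at the cost of needing the nilpotence of the Jacobson radical of an Artinian ring (whose standard proof, via a minimal ideal with $IJ^n\neq 0$, is the one genuinely delicate ingredient you correctly flag); the paper's route is shorter given that Theorem~\ref{thm1} is already in hand. For (iii), the paper simply observes that infinitely many nonzero summands would yield an infinite strictly descending chain of partial sums, whereas you bound the relevant maximal ideals by the socle; your socle argument is sound (any nonzero $m\in M[J]$ with $J^km=0$, $J^{k-1}m\neq 0$, produces a simple submodule isomorphic to $R/J$, and a simple module admits no nonzero map to a simple with a different annihilator), but it is more machinery than needed. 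Both proofs are complete and correct; yours is logically self-contained relative to standard commutative algebra, while the paper's exploits the results it has already established.
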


\begin{proof}
Suppose that $N$ is a cyclic submodule of the Artinian $R$-module $M$. Then $N\cong R/I$ for some ideal $I$ of $R$. It follows that $R/I$ is an Artinian, thus Noetherian, ring (by Theorem~\ref{thm1}), and hence $N$ is both Artinian and Noetherian. Therefore, $N$ possesses a composition series~\cite[Chapter VIII, Theorem 1.11]{TH}. It is immediate from this fact and mathematical induction that
\begin{equation}\label{eq3}
\textrm{for all cyclic} \ N\leq M, \ \textrm{there are maximal ideals} \ J_1,\ldots, J_k \ \textrm{for which} \ J_1J_2\cdots J_kN=\{0\}.
\end{equation}

Now, set
\begin{equation}\label{eq4}
M':=\sum_{J\in\mathrm{Max}(R)}M[J].
\end{equation}

\noindent We claim that this sum is direct. Suppose not. Then there exist distinct maximal ideals $J$, $J_1,\ldots,J_n$ and a nonzero $m\in M[J]\cap(M[J_1]+\cdots+M[J_n])$. Hence there are positive integers $k,k_1,\ldots,k_n$ and a maximal ideal $J^*$ of $R$ such that $J^k+J_1^{k_1}\cdots J_n^{k_n}\subseteq\mathrm{Ann}_R(m)\subseteq J^*$.  Because $J^*$ is prime, we conclude that $J+J_i\subseteq J^*$ for some $i$, and this is a contradiction: since $J$ and $J_i$ are distinct maximal ideals, they are coprime. This proves the directness of the sum.

Our next assertion is that $M'$ exhausts $M$. Toward this end, it suffices, by \eqref{eq3}, to show that for every positive integer $n$ and every $m\in M$: if $J_1,\ldots, J_n$ are distinct maximal ideals of $R$ such that $J_1^{k_1}\cdots J_n^{k_n}m=\{0\}$ for some positive integers $k_1,\ldots,k_n$, then $m\in M'$. We proceed by induction, the case $n=1$ being patent. Now assume that the claim is true for all $k\leq n$, and suppose that $J_1^{k_1}\cdots J_{n+1}^{k_{n+1}}m=\{0\}$, where the $J_i$ are distinct and the $k_i$ are positive integers.  Then as above, $J_1^{k_1}$ and $J_2^{k_2}\cdots J_{n+1}^{k_{n+1}}$ are relatively prime; this means that there are $x\in J_1^{k_1}$ and $y\in J_2^{k_2}\cdots J_{n+1}^{k_{n+1}}$ such that $x+y=1$. But then $xm+ym=m$. By the inductive hypothesis, we have $xm,ym\in M'$, and hence also $m\in M'$.

Finally, $M$ being Artinian implies that there are but finitely many nonzero summands in the direct sum above (if there were infinitely many, we could peel away summands one at a time to produce an infinite, strictly decreasing, chain of submodules of $M$), and the proof is complete.
\end{proof}

\begin{proposition}\label{prop2}  Let $R$ be a commutative ring, let $\mu$ be a sum-small module measure on $R$-$\mathrm{mod}$, and let $M$ be an Artinian minimally $\mu$-big $R$-module. Then $M$ is simple.
\end{proposition}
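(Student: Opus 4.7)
My plan is to reduce the claim to the second assertion of Proposition~\ref{prop1}: if I can show that $\mathrm{Ann}_R(M)$ is a \emph{maximal} ideal (not merely prime), then simplicity of $M$ follows immediately. So the whole argument is devoted to upgrading the primeness already supplied by Proposition~\ref{prop1} to maximality.

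The first step is to exploit indecomposability. Lemma~\ref{min-big-lem}(1) tells me $M$ is indecomposable, while Sharp's decomposition (Lemma~\ref{lem2}) writes the Artinian module $M$ as a finite direct sum $M[J_1] \oplus \cdots \oplus M[J_n]$. Indecomposability collapses this to $M = M[J]$ for a single maximal ideal $J$ of $R$; in other words, every element of $M$ is killed by some power of $J$.

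The second, and central, step is where the sum-small hypothesis enters. For each positive integer $n$ define $M_n := \{m \in M : J^n m = \{0\}\}$. Commutativity of $R$ makes each $M_n$ a genuine submodule, and $M_1 \subseteq M_2 \subseteq \cdots$ with $\bigcup_n M_n = M$ by the previous paragraph. If every $M_n$ were a proper submodule of $M$, minimal $\mu$-bigness would force $\mu(M_n) = 0$ for all $n$, and applying sum-smallness to the countable family $\{M_n\}_{n \geq 1}$ would give $\mu(M) = \mu(\sum_n M_n) = 0$, contradicting $\mu(M) = 1$. Hence some $M_n$ equals $M$, which is to say $J^n \subseteq \mathrm{Ann}_R(M)$.

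To finish, I invoke primeness of $\mathrm{Ann}_R(M)$ from Proposition~\ref{prop1}: since this ideal is prime and contains $J^n$, it contains $J$; since it is a proper ideal ($M$ being nonzero) and $J$ is maximal, $\mathrm{Ann}_R(M) = J$ is maximal. Proposition~\ref{prop1} then yields that $M$ is simple. The only genuine obstacle is the middle step: translating the pointwise statement ``every element of $M$ is killed by some power of $J$'' into the uniform statement ``a single power of $J$ kills all of $M$.'' Without sum-smallness I see no way to bridge that gap, which is precisely why the hypothesis appears in the statement.
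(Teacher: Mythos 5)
Your proof is correct and follows essentially the same route as the paper's: Sharp's decomposition plus indecomposability to get $M = M[J]$, sum-smallness applied to the chain $M_1 \subseteq M_2 \subseteq \cdots$ to force $M = M_n$ for some $n$, and then primeness of $\mathrm{Ann}_R(M)$ from Proposition~\ref{prop1} to conclude $\mathrm{Ann}_R(M) = J$ and hence simplicity. No gaps.
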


\begin{proof} By Lemma~\ref{lem2} and Lemma~\ref{min-big-lem}(1), $M=M[J]$ for some maximal ideal $J$ of $R$. For every positive integer $n$, set $M_n:=\{m\in M\colon J^nm=\{0\}\}$.

Suppose that $\mu(M_n)=0$ for every positive integer $n$. Because $\mu$ is sum-small, we see that $0=\mu(\sum_{n\in\mathbb{Z}^+}M_n)=\mu(M[J])=\mu(M)$, contradicting $M$ being minimally $\mu$-big. Thus for some positive integer $n$, we have $\mu(M_n)=1$. Minimality implies that $M=M_n$, and hence $J^n\subseteq \mathrm{Ann}_R(M)$. Since $\mathrm{Ann}_R(M)$ is a prime ideal of $R$, by Proposition~\ref{prop1}, and $J$ is maximal, we deduce that $J = \mathrm{Ann}_R(M)$. Thus $M$ is simple, by Proposition~\ref{prop1}. \end{proof}

We now give a short proof of the result of Anderson mentioned above.

\begin{theorem}[Anderson \cite{DA}]\label{thm2} Let $R$ be a commutative ring. Then every Artinian $R$-module is countably generated. \end{theorem}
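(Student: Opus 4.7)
The plan is to mimic the proof of Theorem~\ref{thm1} (Hopkins-Levitzki) but with a module measure built out of countable generation rather than finite generation, and to invoke Proposition~\ref{prop2} at the key step. Specifically, for each $R$-module $M$, define
\[\mu(M) := \begin{cases} 0 & \text{if $M$ has a generating set of cardinality at most $\aleph_0$,}\\ 1 & \text{otherwise.}\end{cases}\]
The first two axioms for a module measure are immediate, and axiom (3) follows from Lemma~\ref{lem1}(4) (or directly: if $M/N$ were countably generated and $N$ were countably generated, then combining lifts of generators of $M/N$ with generators of $N$ would give a countable generating set of $M$). Moreover, $\mu$ is sum-small, since a countable union of countable sets is countable, so a sum of countably many countably generated submodules is countably generated.

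Next I would suppose, seeking a contradiction, that $M$ is an Artinian $R$-module with $\mu(M) = 1$. Since $M$ is Artinian, the family of submodules $N \leq M$ with $\mu(N) = 1$ is nonempty (it contains $M$) and has a minimal element $N_0$. By construction $N_0$ is minimally $\mu$-big, and as a submodule of the Artinian module $M$ it is itself Artinian.

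Now Proposition~\ref{prop2} applies directly: $R$ is commutative, $\mu$ is sum-small, and $N_0$ is an Artinian minimally $\mu$-big $R$-module, so $N_0$ is simple. But a simple module is cyclic, hence $\mu(N_0) = 0$, contradicting the fact that $N_0$ is minimally $\mu$-big. Therefore no such $M$ exists, and every Artinian $R$-module is countably generated.

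I do not anticipate any real obstacle here, as the heavy lifting was already done in Lemma~\ref{lem2} and Proposition~\ref{prop2}; the only thing to check carefully is that the countable-generation measure really is sum-small, which is where the requirement of countability (rather than, say, finiteness) is essential. If one tried to run the same argument with $\mu$ detecting finite generation, sum-smallness would fail, and this is exactly why Theorem~\ref{thm2} gives a countable rather than finite bound.
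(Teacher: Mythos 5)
Your proposal is correct and follows essentially the same route as the paper: the same countable-generation measure, the same appeal to Lemma~\ref{lem1}(4) and sum-smallness, and the same use of the Artinian condition to produce a minimally $\mu$-big submodule to which Proposition~\ref{prop2} applies. Your version is only slightly more explicit in isolating the minimal submodule $N_0$ and noting it is Artinian.
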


\begin{proof} Define $\mu$ on $R$-mod by
\[
\mu(M):=
\begin{cases}
0 & \text{if } \ M \ \textrm{is countably generated,} \\
1 & \text{otherwise.} \\
\end{cases}
\]
It is immediate from Lemma \ref{lem1}(4) that $\mu$ is a module measure. Since a countable sum of countably generated modules is still countably generated, we see that $\mu$ is sum-small.

Now, suppose, seeking a contradiction, that there is an Artinian $R$-module $M$ which is not countably generated. Since $M$ is Artinian, we may assume without loss of generality that $M$ is minimally $\mu$-big. Then, by Proposition \ref{prop2}, $M$ is simple, which is absurd. \end{proof}

\begin{remark} In~\cite{DA} Anderson asserted that the above result is false, in general, for noncommutative rings. Then in~\cite{OAK2} Karamzadeh found necessary and sufficient conditions for an Artinian left module over a possibly noncommutative ring to be countably generated. \end{remark}

\section{Noetherian implies Artinian}

We used the concept of minimally $\mu$-big modules to prove the first two theorems of this paper, but have yet to give an application of maximally $\mu$-small modules. In this section we give such an application, and present a new proof that a commutative ring $R$ is Artinian if and only if $R$ is Noetherian of Krull dimension zero (the latter means that every prime ideal of $R$ is minimal). Our approach will use some basic facts about localization and minimal prime ideals. We refer the reader to \cite{RG} for details.

It is well-known that every Noetherian ring has but finitely many minimal prime ideals. In case the prime ideals are maximal, we give a new proof using ideal-theoretic techniques.

\begin{lemma}\label{lem6} A commutative zero-dimensional Noetherian ring has but finitely many prime ideals. \end{lemma}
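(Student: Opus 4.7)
The plan is to argue by contradiction. Suppose $R$ has infinitely many prime ideals, which are all maximal by zero-dimensionality. I would introduce the collection $\mathcal{F}$ of ideals $I\subseteq R$ such that the variety $V(I)=\{P\in\mathrm{Spec}(R)\colon I\subseteq P\}$ is infinite. By hypothesis $(0)\in\mathcal{F}$, so $\mathcal{F}$ is non-empty, and since $R$ is Noetherian, $\mathcal{F}$ has a maximal element $I_0$.

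The core of the argument is to show that $I_0$ must be a prime ideal, and this is where I expect the main obstacle to lie, though it is resolved by a standard maximum-counterexample move. Assume for contradiction that $ab\in I_0$ with $a,b\notin I_0$. By maximality of $I_0$, the strictly larger ideals $I_0+(a)$ and $I_0+(b)$ lie outside $\mathcal{F}$, so the varieties $V(I_0+(a))$ and $V(I_0+(b))$ are both finite. However, any prime $P\in V(I_0)$ contains $ab$, hence contains $a$ or $b$, so $P$ belongs to $V(I_0+(a))\cup V(I_0+(b))$. This forces $V(I_0)$ itself to be finite, contradicting $I_0\in\mathcal{F}$; therefore $I_0$ must be prime.

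Once $I_0$ is known to be prime, zero-dimensionality forces it to be maximal. But then $V(I_0)=\{I_0\}$ is a singleton, which again contradicts $I_0\in\mathcal{F}$. This final contradiction shows that $\mathcal{F}$ must have been empty all along; in particular $(0)\notin\mathcal{F}$, so $V((0))=\mathrm{Spec}(R)$ is finite, as desired. The proof uses only the ascending chain condition and the definition of a prime ideal, which is consistent with the authors' description of the proof as ``ideal-theoretic'' and which avoids any appeal to primary decomposition or associated prime machinery.
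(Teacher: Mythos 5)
Your proof is correct, but it follows a genuinely different route from the paper's. You run the classical ``Noetherian induction'' (maximal counterexample) argument on the family $\mathcal{F}$ of ideals $I$ with $V(I)$ infinite: a maximal element $I_0$ of $\mathcal{F}$ must be prime (it is automatically proper, since $V(R)=\emptyset$ is finite --- a point worth stating explicitly), hence maximal by zero-dimensionality, whence $V(I_0)=\{I_0\}$ is finite, a contradiction. The paper instead works with the ideal $I=\sum_{P}\mathrm{Ann}_R(P)$, uses finite generation of $I$ to trap it inside $\mathrm{Ann}_R(P_1\cdots P_t)$ for finitely many primes, and then invokes the nontrivial ``standard fact'' that a minimal prime $Q$ of a Noetherian ring satisfies $rQ=\{0\}$ for some nonzero $r$ (i.e., that minimal primes are associated primes), deriving a contradiction from $P_1\cdots P_t+Q=R$. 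Your argument is more self-contained, using only the ascending chain condition and the definition of a prime ideal, and it adapts with a small modification (replacing ``$V(I)$ infinite'' by ``$V(I)$ is not a finite union of irreducible closed sets'') to show that an arbitrary commutative Noetherian ring has finitely many minimal primes; the paper's argument, by contrast, is deliberately ideal-theoretic and showcases the annihilator techniques used elsewhere in that section, but it rests on a fact about minimal primes that your proof avoids entirely.
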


\begin{proof} Suppose, seeking a contradiction, that there is a commutative zero-dimensional Noetherian ring $R$ with infinitely many prime ideals. Consider the sum $I:=\sum_{P \ \textrm{prime}}\mathrm{Ann}_R(P)$ of the annihilators of the prime ideals of $R$. Because $R$ is Noetherian, $I=\langle r_1,\ldots,r_n\rangle$ for some $r_1,\ldots,r_n\in R$. Now let $r=r_i$ for some $i$, $1\leq i\leq n$. Then $r=x_1+\cdots +x_k$, with each $x_i\in\mathrm{Ann}_R(P_i)$ for some prime ideal $P_i$. It follows that $r\in\mathrm{Ann}_R(P_1\cdots P_s)$ for some prime ideals $P_1,\ldots, P_s$ of $R$. Because $I$ is finitely generated, we deduce that there are prime ideals $P_1,\ldots, P_t$ of $R$ such that $I\subseteq\mathrm{Ann}_R(P_1\cdots P_t)$. By supposition, we can choose a prime ideal $Q$ distinct from the $P_i$. Then, as $R$ is zero-dimensional, $Q$ is a minimal prime ideal of $R$, and so, by a standard fact, there is a nonzero $r\in R$ such that $rQ=\{0\}$. However, $r\in I$, and hence $rP_1\cdots P_t=\{0\}$. But then $r(P_1\cdots P_t+Q)=rR=\{0\}$, which is absurd, as $R$ has an identity. \end{proof}

We require a final lemma, which gives a new proof of a more restrictive version of Theorem \ref{thm88} to follow.

\begin{lemma}\label{lem7} A commutative Noetherian ring with a unique prime ideal is Artinian. \end{lemma}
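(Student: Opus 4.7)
The plan is to use the module measure framework once more, now via maximally $\mu$-small submodules, and to argue by contradiction. I would define $\mu\colon R\textrm{-mod}\to\{0,1\}$ by $\mu(M):=0$ if $M$ is Artinian and $\mu(M):=1$ otherwise; this is a module measure by Lemma~\ref{lem1}(2). Suppose, for contradiction, that $R$ is Noetherian with a unique prime ideal $P$ but is not Artinian, so $\mu(R)=1$.

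Because $R$ is Noetherian, the nonempty collection of Artinian submodules (equivalently, ideals) of $R$ has a maximal element $N$, and so $N$ is maximally $\mu$-small relative to $R$. Since $P$ is the unique prime of $R$, it equals the nilradical of $R$, and Noetherianity of $R$ forces $P^m=\{0\}$ for some positive integer $m$. Also, $P$ is a maximal ideal, so $R/P$ is a field.

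The key maneuver is to consider the transporter ideal $K:=\{r\in R\colon rP\subseteq N\}$, which certainly contains $N$. I would first argue that $N\subsetneq K$: if instead $K=N$, then from $P^{m-1}\cdot P=P^m=\{0\}\subseteq N$ we would get $P^{m-1}\subseteq K=N$; applying the same reasoning to $P^{m-2}\cdot P\subseteq P^{m-1}\subseteq N$ gives $P^{m-2}\subseteq N$, and iterating yields $R=P^0\subseteq N$, contrary to $\mu(R)=1$. Hence $N\subsetneq K$, and the maximality of $N$ forces $\mu(K)=1$, i.e., $K$ is not Artinian.

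On the other hand, $K/N$ is annihilated by $P$, so it is a vector space over the field $R/P$; since $R$ is Noetherian, $K$ is finitely generated, whence $K/N$ is a finite-dimensional $R/P$-vector space and therefore Artinian. By the standard fact (used already in Lemma~\ref{lem1}(2)) that the middle term of a short exact sequence is Artinian whenever the outer terms are, $K$ itself must be Artinian, contradicting the previous paragraph. The one conceptual hurdle is spotting that the right object to form is the colon ideal $N:P$; once that move is in place, the rest of the argument is routine bookkeeping.
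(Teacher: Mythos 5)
Your proof is correct, and after the shared setup it takes a genuinely different route from the paper's. Both arguments use the same measure ($\mu=0$ iff Artinian) and both use Noetherianity to extract an ideal $N$ maximal among Artinian ideals, i.e., maximally $\mu$-small. The paper then passes to the quotient $S=R/N$, invokes the standard fact that the finitely generated minimal prime $Q=P/N$ of $S$ is killed by some nonzero $s\in S$, and uses locality to show $\langle s\rangle$ is a \emph{minimal} nonzero ideal of $S$, contradicting the fact that every nonzero ideal of $S$ is non-Artinian. You instead stay in $R$ and form the colon ideal $K=(N:P)$: nilpotency of $P$ (which you correctly deduce from $P$ being the finitely generated nilradical) forces $K\supsetneq N$ via the descending induction on powers of $P$, while $K/N$ is a finitely generated $R/P$-vector space and hence Artinian, so $K$ is a strictly larger Artinian ideal --- contradiction. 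Each step checks out, including the base case $m=1$ (where $R$ is already a field) and the identification of $R$-submodules of $K/N$ with $R/P$-subspaces. What your version buys is self-containedness: it avoids the unproved ``standard fact'' about annihilators of minimal primes that the paper cites here and in Lemma~\ref{lem6}, replacing it with the elementary nilpotency of $P$; it is essentially the classical socle-climbing proof of Akizuki's theorem in the local case. What the paper's version buys is a cleaner illustration of the maximally $\mu$-small machinery (the contradiction is reached by exhibiting a minimal ideal inside a module in which every nonzero submodule should contain an infinite descending chain), which is the pedagogical point of that section.
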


\begin{proof} Let $R$ be a commutative Noetherian ring with a unique prime ideal $P$. By Lemma \ref{lem1}(2), letting
\[
\mu(M):=
\begin{cases}
0 & \text{if } \ M \ \textrm{is Artinian,} \\
1 & \text{otherwise} \\
\end{cases}
\]
for each $R$-module $M$, defines a module measure on $R$-mod. Assume, seeking a contradiction, that $R$ is not Artinian. Because $R$ is Noetherian (and since the trivial ideal is Artinian as an $R$-module), there is an ideal $I$ of $R$ which is maximally $\mu$-small, that is, maximal with respect to being an Artinian $R$-module. Note that since $R$ is not Artinian, $I\neq R$. But also $I\neq P$, since otherwise both $P$ and $R/P$ would be Artinian $R$-modules (since $R/P$ is simple), contradicting $R$ not being Artinian. We deduce that $S:=R/I$ is a Noetherian ring with unique (nonzero) prime ideal $Q:=P/I$, such that every nonzero ideal of $S$ contains an infinite, strictly descending, chain of ideals of $S$. Because $Q$ is a finitely generated minimal prime ideal of $S$, there is a nonzero $s\in S$ such that $sQ=\{0\}$. Let $x\in\langle s\rangle$ be nonzero. Then $x=sy$ for some $y\in S$. We cannot have $y\in Q$, since then $x=sy=0$, and so, as $R$ is local, $y$ is a unit. This implies that $\langle x\rangle=\langle s\rangle$. But then $\langle s\rangle$ is a minimal (nonzero) ideal of $S$, contradicting the fact that every nonzero ideal of $S$ contains an infinite, strictly descending, chain of ideals of $S$, and the proof is complete. \end{proof}

We now give a new proof, as promised earlier, of the fact that a commutative ring is Artinian if and only if it is zero-dimensional and Noetherian. As we have already established the forward implication in Theorem~\ref{thm1}, we focus only on the reverse.

\begin{theorem}[Akizuki \cite{YA}]\label{thm88} Every commutative zero-dimensional Noetherian ring is Artinian. \end{theorem}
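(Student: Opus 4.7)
The plan is to combine Lemmas~\ref{lem6} and~\ref{lem7} via localization. Let $R$ be a commutative zero-dimensional Noetherian ring. By Lemma~\ref{lem6}, $R$ has only finitely many prime ideals $P_1,\ldots,P_n$, each of which is both maximal and minimal (a zero-dimensional ring admits no strict inclusions between primes). For each $i$, the localization $R_{P_i}$ is Noetherian, and its prime ideals correspond bijectively to primes of $R$ contained in $P_i$; by minimality of $P_i$, only $P_i$ itself qualifies. Hence $R_{P_i}$ is a Noetherian ring with a unique prime ideal, so Lemma~\ref{lem7} applies to give that each $R_{P_i}$ is Artinian.

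Next, I would show that the canonical $R$-module homomorphism $\phi\colon R\to\prod_{i=1}^n R_{P_i}$ is injective. Suppose $r\in\ker\phi$; then for each $i$ there exists $s_i\in R\setminus P_i$ with $s_ir=0$. The ideal $\langle s_1,\ldots,s_n\rangle$ is not contained in any of the $P_j$ (since $s_j$ lies in the ideal but not in $P_j$), and so, as the $P_j$ exhaust the maximal ideals of $R$, it equals $R$. Writing $1=\sum_{i=1}^n t_is_i$ for some $t_i\in R$ yields $r=\sum_{i=1}^n t_is_ir=0$, as desired.

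Finally, I would argue that $\prod_{i=1}^n R_{P_i}$ is Artinian as an $R$-module, which forces its $R$-submodule $R$ to be Artinian. Each $R_{P_i}$, being an Artinian commutative ring, has a composition series over itself whose composition factors are all copies of $R_{P_i}/P_iR_{P_i}\cong R/P_i$; the latter is a simple $R$-module. Hence each $R_{P_i}$ admits a finite composition series as an $R$-module, and the finite direct product $\prod_{i=1}^n R_{P_i}$ is Artinian as an $R$-module as well. The main delicate point is precisely this last passage from the $R_{P_i}$-module structure to the $R$-module structure, since $R$-submodules of $R_{P_i}$ need not be $R_{P_i}$-submodules; routing the argument through finite composition series (rather than chasing arbitrary descending chains) is what makes this step go through.
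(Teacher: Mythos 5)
Your proof is correct. It shares the paper's skeleton---Lemma~\ref{lem6} supplies the finitely many primes $P_1,\ldots,P_n$, all maximal, and Lemma~\ref{lem7} makes each $R_{P_i}$ Artinian---but the two arguments part ways at the final step. The paper fixes a descending chain of ideals, notes that each of the finitely many localized chains stabilizes, and then invokes the local--global principle (an inclusion of ideals that becomes an equality in every $R_P$ is an equality) to stabilize the original chain. You instead embed $R$ into $\prod_{i=1}^n R_{P_i}$---your injectivity argument is the same local--global fact in a different guise---and then show the target is Artinian as an $R$-module. Your handling of the change-of-rings issue is the right one: an $R$-submodule of $R_{P_i}$ need not be an $R_{P_i}$-submodule, so the descending chain condition does not transfer directly, but a composition series of $R_{P_i}$ over itself has all factors isomorphic to $R_{P_i}/P_iR_{P_i}\cong R/P_i$, which remains simple over $R$, so $R_{P_i}$ has finite length as an $R$-module and the finite product is Artinian. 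One small point worth making explicit: the existence of that composition series uses that $R_{P_i}$ is Noetherian as well as Artinian, which is free here because it is a localization of the Noetherian ring $R$, so there is no hidden circularity. Your route is slightly longer than the paper's one-line local--global finish, but it buys a marginally stronger conclusion, exhibiting $R$ as a submodule of a finite-length $R$-module.
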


\begin{proof} Suppose that $R$ is a commutative zero-dimensional Noetherian ring, and let $\cdots I_3\subseteq I_2\subseteq I_1$ be a descending chain of ideals of $R$. For any prime ideal $P$ of $R$, the localization $R_P$ is a Noetherian ring with unique prime ideal $P_P$, and hence $R_P$ is also Artinian, by Lemma~\ref{lem7}. Now, by Lemma \ref{lem6}, $R$ has but finitely many prime ideals. It follows that there is a positive integer $k$ such that for every prime ideal $P$ of $R$ and every integer $n\geq k$, we have $(I_n)_P=(I_k)_P$. Since this holds for every prime ideal $P$ of $R$, we deduce that $I_n=I_k$, and hence $R$ is Artinian. \end{proof}

\section{Cardinalities of Artinian modules}

We conclude the paper with an application of our results above, where we study possible cardinalities of an Artinian module $M$ relative to the operator ring $R$. Recall from Lemma~\ref{lem2} that every Artinian $R$-module $M$ decomposes as $M=M[J_1]\oplus\cdots\oplus M[J_n]$, for some maximal ideals $J_1,\ldots,J_n$ of $R$. Thus we may consider only the components $M[J_i]$, without loss of generality. We can, moreover, restrict to modules over local rings, as each $M[J_i]$ is naturally a module over the localization $R_{J_i}$.

\begin{theorem}\label{thm3} Let $R$ be a commutative local ring, with unique maximal ideal $J$, and let $M$ be a nontrivial Artinian $R$-module. Then the following hold.
\begin{enumerate}
\item If $R/J$ is finite, then $M$ is countable.
\item If $R/J$ is infinite, then $|M|=|R/J|$.
\end{enumerate}
\end{theorem}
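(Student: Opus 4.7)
The plan is to handle both parts with the same machinery: a cardinality-based module measure together with Proposition~\ref{prop2}. The pivotal observation is that over the local ring $R$, every simple $R$-module is isomorphic to $R/J$, since by Proposition~\ref{prop1} the annihilator of a simple $R$-module is a maximal ideal, and $J$ is the only maximal ideal of $R$.

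For the upper bound in each part, I would set $\kappa := \aleph_0$ in (1) and $\kappa := |R/J|$ in (2); in both cases $\kappa$ is infinite. Define $\mu \colon R\text{-}\mathrm{mod} \to \{0,1\}$ by $\mu(M') = 0$ if $|M'| \le \kappa$ and $\mu(M') = 1$ otherwise. Axioms (1) and (2) of Definition~\ref{measure-def} are immediate. For axiom (3), use the cardinal identity $|M'| = |M'/N| \cdot |N|$: if $|N| \le \kappa < |M'|$, then $|M'/N| \le \kappa$ would force $|M'| \le \kappa \cdot \kappa = \kappa$, a contradiction. The same arithmetic makes $\mu$ sum-small, since a countable sum of submodules of cardinality at most $\kappa$ has cardinality at most $\aleph_0 \cdot \kappa = \kappa$.

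Now suppose for contradiction that $|M| > \kappa$. Because $M$ is Artinian, the descending chain condition yields a submodule $N \le M$ minimal with $\mu(N) = 1$, so that $N$ is minimally $\mu$-big. Then $N$ is itself Artinian, and by Proposition~\ref{prop2} it is simple; by the opening observation, $N \cong R/J$. Hence $|N| = |R/J|$, which is finite in part (1) and equals $\kappa$ in part (2); either way $|N| \le \kappa$, contradicting $\mu(N) = 1$. This establishes the upper bound.

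For the lower bound in part (2), the Artinian hypothesis applied to the nonempty family of nonzero submodules of $M$ produces a minimal such submodule, which is necessarily simple and therefore isomorphic to $R/J$, giving $|M| \ge |R/J|$. Combined with the upper bound, this yields $|M| = |R/J|$. The only verification requiring any care is that the proposed $\mu$ satisfies axiom (3) and is sum-small, but both reduce to routine cardinal arithmetic using that $\kappa$ is infinite; beyond that, the argument is a direct invocation of the results already established in the paper.
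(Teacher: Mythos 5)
Your argument is correct, and it takes a genuinely different route from the paper's. The paper proves Theorem~\ref{thm3} by a direct filtration argument: using Lemma~\ref{lem2} it writes $M=M[J]=\bigcup_n M_n$ with $M_n=\{m : J^nm=0\}$, observes that each successive quotient $M_{n+1}/M_n$ is an Artinian, hence finite-dimensional, vector space over $R/J$, and concludes $|M_n|=|R/J|^{k_n}$ by induction, after which the two cases follow from cardinal arithmetic. You instead introduce a cardinality-threshold module measure ($\mu(M')=0$ iff $|M'|\le\kappa$, with $\kappa=\aleph_0$ or $\kappa=|R/J|$), verify axiom (3) and sum-smallness via the identity $|M'|=|M'/N|\cdot|N|$ (essentially Lemma~\ref{lem1}(3)), extract a minimally $\mu$-big submodule by the descending chain condition, and apply Proposition~\ref{prop2} to conclude it is simple, hence isomorphic to $R/J$ and too small --- a contradiction; the lower bound in (2) comes from a minimal nonzero (hence simple) submodule. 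This is arguably more in the spirit of the paper's measure-theoretic theme than the paper's own proof, and it is shorter given the machinery already in place; the filtration proof, on the other hand, yields finer structural information (each $M_n$ has cardinality $|R/J|^{k_n}$, so in case (1) one sees $M$ as a countable union of finite submodules). One small correction: the fact that every simple $R$-module is isomorphic to $R/J$ should not be attributed to Proposition~\ref{prop1}, which concerns minimally $\mu$-big modules (and a simple module need not be minimally $\mu$-big for your $\mu$); rather, a simple module is cyclic, hence of the form $R/I$ with $I$ maximal, and locality forces $I=J$. With that elementary substitution your proof is complete.
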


\begin{proof} Since $R$ is local, Lemma~\ref{lem2} yields that $M=M[J]$. As in the proof of Proposition~\ref{prop2}, for every positive integer $n$, let $M_n:=\{m\in M\colon J^nm=\{0\}\}$, and note that $M=\bigcup_{n\in\mathbb{Z}^+}M_n$. Now, $M_1$ is annihilated by $J$, and so is naturally a (Artinian) vector space over the field $R/J$. Since $M_1$ is Artinian, it is finite-dimensional over $R/J$, and thus $|M_1|=|R/J|^{k_1}$ for some positive integer $k_1$. Similarly, $M_2/M_1$ is annihilated by $J$ and is also a finite-dimensional vector space over $R/J$. Thus $|M_2/M_1|=|R/J|^{k}$ for some positive integer $k$. Hence
\[|M_2|=|R/J|^k\cdot|R/J|^{k_1}=|R/J|^{k+k_1}=|R/J|^{k_2},\]
where $k_2:=k+k_1$. It follows easily, by induction, that for every positive integer $n$, there is a positive integer $k_n$ such that $|M_n|=|R/J|^{k_n}$.

If $R/J$ is finite, then so is each $M_n$, and since $M=\bigcup_{n\in\mathbb{Z}^+}M_n$, we see that $M$ is countable. Now suppose that $R/J$ is infinite. Then basic cardinal arithmetic implies that $|M_n|=|R/J|^{k_n}=|R/J|$ for every positive integer $n$. Thus
\[|M|=\Big|\bigcup_{n\in\mathbb{Z}^+}M_n\Big|=\aleph_0\cdot|R/J|=|R/J|,\]
and this completes the proof. \end{proof}

\bibliographystyle{plain}

\end{document}